\newtheorem{Theorem}{Theorem}[section]
 \newtheorem{Corollary}[Theorem]{Corollary}
 \newtheorem{Lemma}[Theorem]{Lemma}
 \newtheorem{Proposition}[Theorem]{Proposition}
\theoremstyle{definition}
 \newtheorem{Remark}[Theorem]{Remark}
\newcommand{\II}{ \mathbb{I}}
\newcommand{\R}{\mathbb{R}}
\newcommand{\CC}{\mathbb{C}}
\newcommand{\cA}{{\mathcal A}}
\newcommand{\La}{\Lambda}
\newcommand{\Ke}{{\rm Ker}\,}
\newcommand{\del}{\partial}
\newcommand{\delb}{{\bar \partial}}
\newcommand{\mub}{{\bar \mu}}
\newcommand{\taub}{{\bar \tau}}
\newcommand{\Dol}{\mathrm{Dol}}
\title{Almost Hermitian identities}
\author[J. Cirici]{Joana Cirici}
\address[J. Cirici]{Department of Mathematics and Computer Science, Universitat de Barcelona\\
Gran Via 585\\
08007 Barcelona }
\email{jcirici@ub.edu}
\author[S. Wilson]{Scott O. Wilson}
  \address[S. Wilson]{Department of Mathematics, Queens College, City University of New York, 65-30 Kissena Blvd., Flushing, NY 11367}
  \email{scott.wilson@qc.cuny.edu}
\thanks{J. Cirici would like to acknowledge partial support from the AEI/FEDER, UE (MTM2016-76453-C2-2-P) and the Serra H\'{u}nter Program.
}
\thanks{S. Wilson acknowledges support provided by a PSC-CUNY Award, jointly funded by The Professional Staff Congress and The City University of New York.}
\begin{document}

\begin{abstract} 
We study the local commutation relation between the Lefschetz operator and the exterior differential on an almost complex manifold with a compatible metric. The identity that we obtain generalizes the backbone of the local K\"{a}hler identities to the setting of almost Hermitian manifolds, allowing for new global results for such manifolds.
\end{abstract}

\maketitle

\section{Introduction}

On a K\"{a}hler manifold $(M,J,\omega)$, the most fundamental local identity is perhaps the commutation relation 
between the exterior differential $d$ and the adjoint $\Lambda$ to the Lefschetz operator,
\begin{equation}\label{theequation}
 [\La, d]= \star\, \mathbb{I}^{-1} \,d \, \mathbb{I}\,  \star,
\end{equation}
where $\star$ denotes the Hodge star operator and $\mathbb{I}$ denotes the extension of $J$ to all forms.

This identity, due to A. Weil \cite{Weil}, strongly depends on the K\"{a}hler condition, $d\omega=0$, and in fact is true when removing the integrability condition $N_J\equiv 0$. So, it is valid for almost K\"{a}hler and also symplectic manifolds as well \cite{BaTo,TaToPreprint,AK}.
On the other hand, there is also a generalization of the K\"{a}hler identities in the Hermitian setting (see \cite{De,SWH}), which strongly uses integrability.

When the manifold is only almost Hermitian, then the above local identity does not hold in general, as noticed implicitly in \cite{Osh}. The purpose of this short note is to show precisely how the above K\"{a}hler identity (\ref{theequation}) becomes modified when the form $\omega$ is not closed. 

The main result is given in Theorem \ref{AHidentity} below, which has several applications including the uniqueness of Dirichlet problem
\[
\del \delb u = g \quad \quad \textrm{with} \quad u |_{\del \Omega} = \phi,
\]
on any compact domain $\Omega$ in an almost complex manifold. This in turn implies that the Dolbeault cohomolgy introduced in \cite{CWDol}, for all almost complex manifolds, satisfies $H_{\Dol}^{0,0}(M)\cong \CC$ for a compact connected almost complex manifold.

Another application of the almost Hermitian identities of Theorem \ref{AHidentity}  appears in forthcoming work by Feehan and Leness \cite{FL}. There the fundamental relation of Proposition \ref{0qforms} is used to show that the moduli spaces of unitary anti-self-dual connections over any almost Hermitian 4-manifold is almost Hermitian, whenever the Nijenhuis tensor has sufficiently small $C^0$-norm. This generalizes a well known result for K\"ahler manifolds that was exploited in Donaldson's work in the 1980's, and is expected to have consequences for the topology of almost complex 4-manifolds which are of so-called Seiberg-Witten simple type.

When $M$ is compact, local identities lead to consequences in cohomology, often governed by geometric-topological inequalities. Indeed, the exterior differential inherits a bidegree decomposition into four components $d=\mub+\del+\del+\mu$ and the Hermitian metric allows one to consider the Laplacian operators associated to each of these components. In the compact case, the numbers 
\[\ell^{p,q}:=\dim \Ke(\Delta_\delb+\Delta_\mu)|_{(p,q)}\]
given by the kernel of $\Delta_\delb+\Delta_\mu$ in bidegree $(p,q)$
are finite by elliptic operator theory.
When $J$ is integrable (and so $M$ is a complex manifold) the operator $\Delta_\mu$ vanishes and these are just the Hodge numbers $\ell^{p,q}=h^{p,q}$. In this case, the Hodge-to-de Rham spectral sequence gives inequalities 
\[\sum_{p+q=k}\ell^{p,q}\geq b^k,\]
where $b^k$ denotes the $k$-th Betti number. On the other hand, as shown in \cite{AK},
one main consequence of the local identity (\ref{theequation}) in the almost K\"{a}hler case $d\omega=0$ is the converse inequality 
\[\sum_{p+q=k}\ell^{p,q}\leq b^k.\]
Of course, in the integrable K\"{a}hler case both inequalities are true and so one recovers the well-known consequence of the Hogde decomposition 
\[\sum_{p+q=k}\ell^{p,q}=b^k.\]
The local identities of \cite{De,SWH} for complex non-K\"{a}hler manifolds include 
other algebra terms which lead to further Laplacian operators, leading also to various inequalities relating the geometry with the topology of the manifold. 

With this note, we aim to further understand the origin of these inequalities 
by means of the correct version of (\ref{theequation}) for almost Hermitian manifolds
for which, a priori, the only geometric-topological inequality in the compact case is given by 
\[\sum_{p+q=k}\dim \Ke(\Delta_\mub+\Delta_\delb+\Delta_\del+\Delta_\mu)|_{(p,q)}\leq b^k.\] 
\medskip
\noindent
\textbf{Acknowledgments. }The authors would like to thank Paul Feehan for encouraging us develop some previous notes into the present paper.

\section{Preliminaries}

Let $(\cA,d)$ denote the complex valued differential forms of an almost complex manifold $(M,J)$. For any Hermitian metric, define the associated Hodge-star operator \[\star: \cA^{p,q}_x \to \cA^{n-q,n-p}_x\,\,\text{ by }\,\,
\omega \wedge \star \bar \eta = \langle \omega , \eta \rangle \textrm{vol},
\] 
where $\omega$ is the fundamental $(1,1)$-form, and $\textrm{vol} = \frac{1}{n!} \omega^n \in \cA^{n,n}$ is the volume form determined by the Hermitian metric. 
Note $\star^2 = (-1)^k$ on $\cA^k$.

Define $d^* = - \star d \star$, so that $d^*\star  = (-1)^{k+1} \star d$ on $\cA^k$. Similarly, consider the bidegree decomposition of the exterior differential 
\[d= \mub+\delb+\del+\mu,\] where the bidegree of each component is given by
\[|\mub|=(-1,2), |\delb|=(0,1), |\del|=(1,0)\text{ and }|\mu|=(2,-1).\]
We then let 
 $\bar \delta^* = - \star \delta \star$ for $\delta= \mub, \delb, \del, \mu$
 and we have the bidegree decomposition \[d^*=\mub^*+\delb^*+\del^*+\mu^*.\]
 where
 \[|\mub^*|=(1,-2), |\delb^*|=(0,-1), |\del^*|=(-1,0)\text{ and }|\mu^*|=(-2,1).\]

Let $ L : \cA^{p,q} \to \cA^{p+1,q+1}$ be the real $(1,1)$-operator given by $L(\eta) = \omega \wedge \eta$. Let 
 $\Lambda = L^* = \star^{-1} L \star$. Then $\star \Lambda = L \star$ and $\star L = \Lambda \star$.
 Let $P^k = \Ke \Lambda \cap \cA^k$ denote the primitive forms of total degree $k$.

It  is well known that $\{L, \La, [L,\La] \}$ defines a representation of $sl(2,\CC)$ and induces the Lefschetz decomposition on forms:

\begin{Lemma} \label{Lefdecomp} We have
\[
\cA^k = \bigoplus_{r=0}^{k/2} L^r P^{k-2r},
\]
and this direct sum decomposition respects the $(p,q)$ bigrading.
\end{Lemma}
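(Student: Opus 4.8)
The plan is to deduce the decomposition entirely from the $\mathfrak{sl}(2,\CC)$-representation recalled just above, applied fiberwise. Since $L$, $\La$ and $H:=[L,\La]$ act on the finite-dimensional complex vector space $\cA^\bullet_x=\bigoplus_k \cA^k_x$ at each point $x$, I would first record the standard bracket relations $[H,L]=2L$ and $[H,\La]=-2\La$, together with the fact that $H$ acts on $\cA^k_x$ as multiplication by the scalar $k-n$. This identifies the degree grading with the weight-space decomposition of the $\mathfrak{sl}(2,\CC)$-module $\cA^\bullet_x$, so that $L$ raises degree (weight) by $2$ and $\La$ lowers it by $2$.

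With this in hand I would invoke complete reducibility to write $\cA^\bullet_x$ as a direct sum of irreducible $\mathfrak{sl}(2,\CC)$-modules $V_m$. Each $V_m$ (of highest weight $m$) has a one-dimensional space of lowest-weight vectors, namely those annihilated by the lowering operator $\La$; by definition these are exactly the primitive forms, and a lowest-weight vector of $V_m$ has weight $-m$, hence lies in degree $k=n-m\le n$, i.e. in $P^{n-m}$. Applying $L$ repeatedly to such a generator $v$ produces a basis $\{v, Lv, \dots, L^m v\}$ of $V_m$ in degrees $n-m, n-m+2, \dots, n+m$. Collecting the degree-$k$ pieces of all these strings shows that every element of $\cA^k_x$ is uniquely a sum $\sum_r L^r\beta_r$ with $\beta_r\in P^{k-2r}$, the directness of the sum being precisely the statement that distinct irreducibles and distinct powers of $L$ land in independent weight subspaces. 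Since a primitive generator has weight $\le 0$ we have $P^{j}=0$ for $j>n$, so the terms with $k-2r>n$ vanish automatically and the sum may be written over $0\le r\le k/2$ as stated.

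Finally, for compatibility with the bigrading I would note that $L$ has bidegree $(1,1)$ and $\La$ has bidegree $(-1,-1)$; hence $\La$ maps $\cA^{p,q}$ into $\cA^{p-1,q-1}$, so its kernel $P^k$ is the direct sum of its bihomogeneous pieces $P^k\cap\cA^{p,q}$, while $L^r$ sends $\cA^{p-r,q-r}$ into $\cA^{p,q}$. Refining the decomposition above bidegree by bidegree then gives $\cA^{p,q}=\bigoplus_r L^r\bigl(P^{k-2r}\cap \cA^{p-r,q-r}\bigr)$ for $p+q=k$, which is the claimed refinement. I expect the only delicate point to be the bookkeeping between $\mathfrak{sl}(2,\CC)$-weights and form-degrees — pinning down the scalar by which $H$ acts and the correct truncation of the index $r$ — rather than any conceptual difficulty, since the representation-theoretic input is standard once the commutation relations are granted.
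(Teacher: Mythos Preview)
Your argument is correct and is the standard derivation of the Lefschetz decomposition from the fiberwise $\mathfrak{sl}(2,\CC)$-representation. The paper does not actually prove this lemma: it is stated as well known, immediately after the remark that $\{L,\La,[L,\La]\}$ defines a representation of $\mathfrak{sl}(2,\CC)$, with no proof environment. So there is nothing to compare against; your write-up simply fills in the details behind that sentence, and the only minor comment is that the identification of $H$ with multiplication by $k-n$ and the bookkeeping on the range of $r$ are handled correctly.
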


Let $[A,B] = AB-(-1)^{|A| |B|}BA$ be the graded commutator, where $|A|$ denotes the total degree of $A$. This defines a graded Poisson algebra
\[
[A, BC] =[A,B]C + (-1)^{|A| |B|} B[A,C]
\] 
 The following is well known (e.g. \cite{Huy} Corollary 1.2.28):
 
\begin{Lemma} \label{[L^j,Lambda]} 
For all $j \geq 0$  and $\alpha \in \cA^k$
\[
[L^j, \Lambda] \alpha = j(k-n+j-1) L^{j-1} \alpha.
\]
\end{Lemma}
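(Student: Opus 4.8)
The plan is to prove the identity by induction on $j$, bootstrapping from the single fundamental relation of the $\mathfrak{sl}(2,\CC)$-action. The base case $j=0$ is immediate, since $[L^0,\Lambda]=[\mathrm{id},\Lambda]=0$ matches the right-hand side, whose prefactor $j$ vanishes. The case $j=1$ is the statement $[L,\Lambda]\alpha=(k-n)\alpha$ for $\alpha\in\cA^k$; this is precisely the assertion that the grading element $H=[L,\Lambda]$ of the representation $\{L,\Lambda,[L,\Lambda]\}$ acts on $\cA^k$ as the weight $k-n$, so I would take it as the known input from the $\mathfrak{sl}(2,\CC)$-structure recalled before Lemma~\ref{Lefdecomp}.

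For the inductive step I would first observe that $L$ and $\Lambda$ both have even total degree ($|L|=2$ and $|\Lambda|=-2$), so all the signs $(-1)^{|A||B|}$ in graded commutators involving them are $+1$; consequently every graded commutator below is an ordinary commutator and the graded Poisson rule specializes to $[AB,\Lambda]=A[B,\Lambda]+[A,\Lambda]B$. Taking $A=L$ and $B=L^{j-1}$ yields
\[
[L^j,\Lambda]=L\,[L^{j-1},\Lambda]+[L,\Lambda]\,L^{j-1}.
\]
Evaluating on $\alpha\in\cA^k$, the first summand is controlled by the inductive hypothesis at level $j-1$, giving $(j-1)(k-n+j-2)\,L^{j-1}\alpha$ after applying $L$. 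For the second summand the essential point is the degree shift: since $L^{j-1}\alpha\in\cA^{k+2(j-1)}$, the operator $[L,\Lambda]$ acts there by the weight $k+2(j-1)-n$, producing $(k-n+2j-2)\,L^{j-1}\alpha$.

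Adding the two contributions, the scalar coefficient $(j-1)(k-n+j-2)+(k-n+2j-2)$ collapses, after routine algebra, to $j(k-n+j-1)$, which is exactly the claimed formula. I do not expect any genuine obstacle here: the content is entirely the $\mathfrak{sl}(2,\CC)$ commutation relations, and the only place demanding care is the bookkeeping of the degree shift in the second summand — one must remember to evaluate the weight of $[L,\Lambda]$ at degree $k+2(j-1)$ rather than $k$ — together with the final coefficient simplification, where an arithmetic slip would propagate through the induction.
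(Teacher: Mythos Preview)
Your induction argument is correct; the algebra in the inductive step checks out, and you were careful about the degree shift when applying $[L,\Lambda]$ to $L^{j-1}\alpha$. The paper does not actually prove this lemma but simply records it as well known with a citation to Huybrechts (Corollary~1.2.28), so your argument supplies exactly the standard proof that the reference would give.
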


By induction, and the fact that $[d,L]$ and $L$ commute, we have:

\begin{Lemma} \label{[d,L^n]}
For all $n \geq 1$
\[
[d,L^n ] = n[d,L] L^{n-1},
\]
and
\[
\star [d,L] \alpha = (-1)^{k+1} [d^*, \Lambda] \star  \alpha \quad \textrm{for  $ \alpha \in \cA^k$}.
\]

\end{Lemma}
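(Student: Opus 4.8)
The plan is to treat the two statements in turn, both by carefully unwinding the graded commutator conventions.

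For the first identity I would begin with the clean observation that $[d,L]$ is exterior multiplication by $d\omega$. Since $L$ has even total degree $2$, the graded commutator $[d,L]$ equals the ordinary commutator $dL-Ld$, and applying the Leibniz rule to $d(\omega\wedge\eta)$ (with $|\omega|=2$ even, so no sign) gives $[d,L]\eta=d\omega\wedge\eta$. Exterior multiplication by the even form $\omega$ commutes with exterior multiplication by the $3$-form $d\omega$, because the relevant sign is $(-1)^{2\cdot 3}=1$; hence $[d,L]$ commutes with $L$ in the ordinary sense. With this in hand, $[d,L^n]=n[d,L]L^{n-1}$ follows by induction on $n$. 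The base case $n=1$ is immediate, and for the inductive step I would apply the graded Poisson rule with $A=d$, $B=L^n$, $C=L$: since $|L^n|=2n$ is even, the sign $(-1)^{|d||L^n|}$ is trivial, giving $[d,L^{n+1}]=[d,L^n]L+L^n[d,L]$. Substituting the inductive hypothesis and using that $[d,L]$ commutes with $L$ collapses the right-hand side to $n[d,L]L^n+[d,L]L^n=(n+1)[d,L]L^n$.

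For the second identity I would compute $[d^*,\Lambda]\star\alpha$ directly from the relations $\Lambda\star=\star L$ and $d^*\star=(-1)^{k+1}\star d$ on $\cA^k$. As $\Lambda$ has even degree $-2$, the commutator is the ordinary one $d^*\Lambda-\Lambda d^*$. Evaluating on $\star\alpha$ with $\alpha\in\cA^k$: the term $\Lambda d^*\star\alpha$ uses $d^*\star$ on $\cA^k$ and then $\Lambda\star=\star L$, producing $(-1)^{k+1}\star Ld\alpha$; the term $d^*\Lambda\star\alpha=d^*\star L\alpha$ involves $L\alpha\in\cA^{k+2}$, so $d^*\star$ contributes the sign $(-1)^{(k+2)+1}=(-1)^{k+1}$, producing $(-1)^{k+1}\star dL\alpha$. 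Subtracting gives $[d^*,\Lambda]\star\alpha=(-1)^{k+1}\star(dL-Ld)\alpha=(-1)^{k+1}\star[d,L]\alpha$, and multiplying through by $(-1)^{k+1}$ yields the claimed identity $\star[d,L]\alpha=(-1)^{k+1}[d^*,\Lambda]\star\alpha$.

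Both computations are routine once the conventions are fixed; the only place demanding real care is the degree bookkeeping in the second part. The sign in $d^*\star=(-1)^{k+1}\star d$ depends on the degree of the form it acts on, and because $L$ raises total degree by $2$ one must remember to apply this relation on $\cA^{k+2}$, not on $\cA^k$, in the term $d^*\star L\alpha$. The pleasant point is that $(-1)^{k+2}=(-1)^{k}$, so the two signs coincide and the terms recombine cleanly into $\star(dL-Ld)$; confirming that these signs genuinely agree is the main thing to get right.
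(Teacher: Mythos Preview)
Your proof is correct and follows exactly the approach the paper indicates: for the first identity you use induction together with the fact that $[d,L]$ (being wedge by $d\omega$) commutes with $L$, which is precisely what the paper states; for the second identity your direct computation via $\Lambda\star=\star L$ and $d^*\star=(-1)^{k+1}\star d$ is the natural argument, and the paper gives no further details beyond listing these relations in the preliminaries.
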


Let $\II$ be the extension of $J$ to all forms as an algebra map with respect to wedge product, so that
 $\mathbb{I}_{p,q}$ acts on $\cA^{p,q}$ by multiplication by $i^{p-q}$. 
Then $\mathbb{I}_{p,q}^2 = (-1)^{p+q}$ so that $\mathbb{I}_{p,q}^{-1} = (-1)^{p+q} \mathbb{I}_{p,q}$. 
Note that $\mathbb{I}$ and $\star$ commute, and $\mathbb{I}$  and $L^n$ commute for all $n \geq 0$.
The following is a direct calculation.

\begin{Lemma} \label{conjbyI}
If an operator $T_{r,s}:\cA^{p,q} \to \cA^{p+r,q+s}$ has bidegree $(r,s)$, then 
\[
\mathbb{I}_{r+p,s+q}^{-1} \circ  T_{r,s} \circ  \mathbb{I}_{p,q} = (-i)^{r-s} T_{r,s}.
\]
\end{Lemma}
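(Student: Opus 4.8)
The plan is to verify the identity pointwise on homogeneous forms, exploiting the fact that $\mathbb{I}$ acts as a scalar on each bidegree. Since $\mathbb{I}_{p,q}$ is multiplication by $i^{p-q}$ on $\cA^{p,q}$, everything in sight is scalar multiplication, and the proof reduces to bookkeeping of exponents of $i$. Both sides of the asserted equation are operators defined blockwise on the bidegree decomposition $\cA = \bigoplus_{p,q}\cA^{p,q}$, so it suffices to check equality after evaluating on an arbitrary $\alpha \in \cA^{p,q}$ and then invoke linearity.

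First I would record that $\mathbb{I}_{p,q}^{-1}$ acts on $\cA^{p,q}$ by the scalar $i^{q-p}$ (consistent with $\mathbb{I}_{p,q}^{-1} = (-1)^{p+q}\mathbb{I}_{p,q}$, since $(-1)^{p+q} i^{p-q} = i^{q-p}$). Next, for $\alpha \in \cA^{p,q}$ I would apply the three operators in order: applying $\mathbb{I}_{p,q}$ produces $i^{p-q}\alpha$; applying $T_{r,s}$, which is linear, yields $i^{p-q}\,T_{r,s}\alpha$, and crucially $T_{r,s}\alpha$ now lies in $\cA^{p+r,\,q+s}$ by the bidegree hypothesis, which is exactly why the outer operator is $\mathbb{I}_{r+p,s+q}^{-1}$; applying this last operator multiplies by the scalar $i^{(q+s)-(p+r)}$. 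Collecting the scalar factors gives the exponent $(p-q) + (q+s) - (p+r) = s-r$, so the composite acts as multiplication by $i^{s-r}$.

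Finally I would rewrite $i^{s-r} = i^{-(r-s)} = (i^{-1})^{r-s} = (-i)^{r-s}$, using $i^{-1} = -i$, to conclude $\mathbb{I}_{r+p,s+q}^{-1} \circ T_{r,s} \circ \mathbb{I}_{p,q}\,\alpha = (-i)^{r-s} T_{r,s}\alpha$ for every $\alpha \in \cA^{p,q}$, and hence the operator identity on all of $\cA$ by linearity. There is really no substantive obstacle here: the only point requiring care is making sure the correct index $\mathbb{I}_{r+p,s+q}^{-1}$ is used (justified precisely because $T_{r,s}$ shifts bidegree by $(r,s)$) and that the exponent arithmetic, together with the conversion $i^{-1}=-i$, is carried out without sign errors. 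This is the sense in which the statement is, as the text says, ``a direct calculation.''
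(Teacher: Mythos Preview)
Your proof is correct and is exactly the ``direct calculation'' the paper alludes to: tracking the scalar action of $\mathbb{I}$ on each bidegree and simplifying the exponent of $i$. There is nothing to add.
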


The above result readily implies that 
\[\mathbb{I}^{-1} \circ d \circ \mathbb{I}=-i(\mub-\delb+\del-\mu).\]
Finally, the following is well known (e.g.  \cite{Huy} Proposition 1.2.31):

 \begin{Lemma} \label{starL=LI}
 If $M$ is an almost Hermitian manifold of dimension $2n$, then for all $j \geq 0$ and all $\alpha \in P^k$,
\[
\star L^j  \alpha = (-1)^{\frac {k(k+1)}{2} } \frac{j!}{(n-k-j)!} L^{n-k-j} \, \II \alpha .
\]
\end{Lemma}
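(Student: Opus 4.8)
The plan is to treat this as a \emph{pointwise} statement on the exterior algebra of the Hermitian tangent space, since $\star$, $L$, $\Lambda$ and $\II$ are all fiberwise operators, and then to fix the degree $k$ while inducting on $j$. Writing $c_j = (-1)^{k(k+1)/2}\tfrac{j!}{(n-k-j)!}$, the claim becomes $\star L^j\alpha = c_j\,L^{n-k-j}\,\II\alpha$ for every primitive $\alpha \in P^k$. The range of interest is $0\le j\le n-k$; for $j>n-k$ both sides vanish (since $L^{n-k+1}\alpha=0$ for a primitive $k$-form, while the right-hand side carries $1/(n-k-j)!$ with a non-positive argument, read as zero), so nothing is lost. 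The key simplification is that the induction reduces the entire family to the single base case $j=0$.

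For the inductive step I would use the commutation relation $\star L = \Lambda\star$ from the Preliminaries, which gives $\star L^{j+1} = \Lambda\,\star L^{j}$, so that
\[
\star L^{j+1}\alpha = \Lambda\big(\star L^{j}\alpha\big) = c_j\,\Lambda L^{n-k-j}\,\II\alpha .
\]
Here I use that $\II$ commutes with $\Lambda$: since $\II$ commutes with $\star$ and with $L$, one has $\II\Lambda = \II\star^{-1}L\star = \star^{-1}L\,\II\star = \Lambda\II$, so $\II\alpha$ is again primitive of degree $k$. Evaluating $\Lambda$ on $L^{m}\beta$ for a primitive $\beta$ of degree $k$ is controlled by Lemma \ref{[L^j,Lambda]}: because $\Lambda\beta=0$ and $L^m$, $\Lambda$ have even degree, $\Lambda L^{m}\beta = -[L^{m},\Lambda]\beta = m(n-k-m+1)L^{m-1}\beta$. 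Taking $m=n-k-j$ produces the factor $(n-k-j)(j+1)$, and a one-line check gives $c_j\,(n-k-j)(j+1)=c_{j+1}$, closing the induction.

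The real content, and the \emph{main obstacle}, is therefore the base case $j=0$, namely $\star\alpha = (-1)^{k(k+1)/2}\tfrac{1}{(n-k)!}\,L^{n-k}\,\II\alpha$ for $\alpha\in P^k$. I see two routes. The first is a direct computation in a unitary coframe $\{dz_i,d\bar z_i\}$ with $\omega = \tfrac{i}{2}\sum_i dz_i\wedge d\bar z_i$: the monomials $dz_I\wedge d\bar z_J$ with $I\cap J=\emptyset$ are primitive and span $P^k$, so it suffices to verify the identity on these by evaluating $\star(dz_I\wedge d\bar z_J)$ from the defining relation for $\star$ and $L^{n-k}\II(dz_I\wedge d\bar z_J)$ explicitly, and to track the reordering sign — which is precisely where $(-1)^{k(k+1)/2}$ and the factor $1/(n-k)!$ (from expanding $\omega^{n-k}$) originate. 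The second route avoids coordinates: both $\alpha\mapsto\star\alpha$ and $\alpha\mapsto L^{n-k}\II\alpha$ are $U(n)$-equivariant maps from the primitive forms of bidegree $(p,q)$ (with $p+q=k$) to $\cA^{n-q,n-p}$, and since those primitive forms constitute an irreducible $U(n)$-module, Schur's lemma forces the two maps to be proportional; the constant $c_0$ is then pinned down by testing on one convenient primitive form. I expect the sign bookkeeping in the coordinate computation to be the delicate point, which is exactly why the representation-theoretic route — reducing everything to a single scalar — is attractive.
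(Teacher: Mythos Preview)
The paper does not actually prove this lemma: it is quoted as well known and referenced to Huybrechts, \emph{Complex Geometry}, Proposition~1.2.31. So there is no ``paper's own proof'' to compare against, and your proposal is really a proof of a standard linear-algebra fact. Your reduction to the base case $j=0$ via $\star L=\Lambda\star$ and the commutator formula $[L^m,\Lambda]$ is clean and correct; the check $c_j(n-k-j)(j+1)=c_{j+1}$ goes through exactly as you say, and the observation that $\II$ commutes with $\Lambda$ (hence preserves primitivity) is right.

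There is, however, a genuine slip in your first route for the base case. The monomials $dz_I\wedge d\bar z_J$ with $I\cap J=\varnothing$ are indeed primitive, but they do \emph{not} span $P^k$. Already for $n=2$ and $(p,q)=(1,1)$ one has $\dim P^{1,1}=3$, while there are only two disjoint-index monomials $dz_1\wedge d\bar z_2$ and $dz_2\wedge d\bar z_1$; the missing primitive direction is $dz_1\wedge d\bar z_1-dz_2\wedge d\bar z_2$. So verifying the identity on those monomials alone does not suffice. The standard fix (and this is essentially what Huybrechts does) is to use the tensor factorization $\bigwedge^\bullet V^*\cong\bigotimes_{i=1}^n\bigwedge^\bullet V_i^*$ over the complex lines, under which $\star$, $L$, and $\II$ are multiplicative, reducing the base case to $n=1$ where it is a two-line check. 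Your second route via Schur's lemma is fine as stated: each $P^{p,q}$ is an irreducible $U(n)$-module, both sides are equivariant maps into $\cA^{n-q,n-p}$, and a single evaluation (e.g.\ on $dz_1\wedge\cdots\wedge dz_p\wedge d\bar z_{p+1}\wedge\cdots\wedge d\bar z_{p+q}$) fixes the constant.
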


\section{Almost Hermitian identities}

By the previous section, any differential form $\eta$ can be written as $\eta = L^j \alpha$ for unique $j,k \geq 0$ and $\alpha \in P^k$. 
We now state the main result:

\begin{Theorem} \label{AHidentity}
 For any almost Hermitian manifold of dimension $2n$, let $\alpha \in P^k$,  with $d \alpha$ written as
 \begin{equation} \label{dalpha}
  d \alpha = \alpha_0 + L \alpha_1 + L^2 \alpha_2 +\cdots,
\end{equation}
 for  unique $\alpha_r \in P^{k+1-2r}$. Then, for all $j \geq 0$, 
  \begin{align*} 
[\Lambda, d] L^j \alpha - \star\, \mathbb{I}^{-1} \,d \, \mathbb{I}\,  \star  L^j \alpha &=   \frac{1}{j+1}  \,  \II^{-1} \,   [d^*, \Lambda]  \, \II   \,  L^{j+1} \alpha  \\
&+  j \Lambda [d,L]L^{j-1} \alpha + j(j-1)(k-n+j-1) [d,L] L^{j-2} \alpha \\
& + \sum_{r=2}^\infty   f_{n,j,k}(r) L^{j+r-1} \alpha_r,
\end{align*}
where
\[
f_{n,k,j}(r) = (r(n-k+r)-j) +(-1)^{r}  \frac{j!(n-k-j+r)!}{(j+r-1)!(n-k-j)!}.
\]
\end{Theorem}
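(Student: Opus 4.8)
The plan is to compute the two operators on the left-hand side separately, to expand $d\alpha=\sum_{r\ge 0}L^r\alpha_r$ with $\alpha_r\in P^{k+1-2r}$ throughout, and then subtract. The entire computation is the $sl(2)$-algebra packaged in Lemmas~\ref{[L^j,Lambda]}, \ref{[d,L^n]} and \ref{starL=LI}, together with the facts that $\mathbb{I}$ commutes with $\star$ and with $L$ and that $\mathbb{I}^2=(-1)^k$ on $\cA^k$.

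For the first term $[\Lambda,d]L^j\alpha$, note that since $|\Lambda|=-2$ and $|d|=1$ the graded commutator is the ordinary one, $[\Lambda,d]=\Lambda d-d\Lambda$. I would expand $dL^j\alpha = j[d,L]L^{j-1}\alpha + L^j d\alpha$ by Lemma~\ref{[d,L^n]}, and evaluate $\Lambda$ on each $L^m(\text{primitive})$ via Lemma~\ref{[L^j,Lambda]} and $\Lambda(\text{primitive})=0$, which gives $\Lambda L^m\beta = m(n-\ell-m+1)L^{m-1}\beta$ for $\beta\in P^\ell$. Applying this to $\Lambda L^{j+r}\alpha_r$ and to $\Lambda L^j\alpha$, then re-differentiating $d\Lambda L^j\alpha$ by Lemma~\ref{[d,L^n]} once more, collects the output into three pieces: $j\Lambda[d,L]L^{j-1}\alpha$, then $j(j-1)(k-n+j-1)[d,L]L^{j-2}\alpha$, and a single sum $\sum_{r\ge0}\bigl(r(n-k+r)-j\bigr)L^{j+r-1}\alpha_r$. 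The only nonroutine point here is simplifying the coefficient $(j+r)(n-k+r-j)-j(n-k-j+1)$ down to $r(n-k+r)-j$, which is precisely the first summand of $f_{n,k,j}(r)$; the first two pieces already match two terms of the claimed right-hand side.

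For the second term $\star\mathbb{I}^{-1}d\mathbb{I}\star L^j\alpha$, I would apply Lemma~\ref{starL=LI} to rewrite $\star L^j\alpha$ as a multiple of $L^{n-k-j}\mathbb{I}\alpha$, push the inner $\mathbb{I}$ through by $\mathbb{I}^2=(-1)^k$, and split $dL^{n-k-j}\alpha$ via Lemma~\ref{[d,L^n]} into a $[d,L]$-piece and a $\sum_r L^{n-k-j+r}\alpha_r$-piece. On the second piece the outer $\mathbb{I}^{-1}$ and the final $\star$ (again Lemma~\ref{starL=LI}, now on $\alpha_r\in P^{k+1-2r}$) return $L^{j+r-1}\alpha_r$, the $\mathbb{I}^{-1}$ cancelling the $\mathbb{I}$ produced by the star lemma; the accumulated sign, built from the three exponents $\tfrac{k(k+1)}2+k+\tfrac{\ell(\ell+1)}2$ with $\ell=k+1-2r$, collapses modulo $2$ to $(-1)^{1+r}$, giving exactly $(-1)^{1+r}\tfrac{j!(n-k-j+r)!}{(n-k-j)!(j+r-1)!}$. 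On the $[d,L]$-piece I would use the second identity of Lemma~\ref{[d,L^n]}, namely $\star[d,L]\beta=(-1)^{m+1}[d^*,\Lambda]\star\beta$, followed once more by Lemma~\ref{starL=LI}; the factorials telescope and the signs leave the coefficient $-\tfrac1{j+1}$, so this piece equals $-\tfrac1{j+1}\mathbb{I}^{-1}[d^*,\Lambda]\mathbb{I}L^{j+1}\alpha$.

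Subtracting, the $[d,L]$-piece of the second term becomes the leading $\tfrac1{j+1}\mathbb{I}^{-1}[d^*,\Lambda]\mathbb{I}L^{j+1}\alpha$ of the right-hand side, the $j\Lambda[d,L]L^{j-1}\alpha$ and $j(j-1)(k-n+j-1)[d,L]L^{j-2}\alpha$ terms carry over unchanged, and the two sums merge coefficientwise into $\sum_{r\ge0}f_{n,k,j}(r)L^{j+r-1}\alpha_r$ since $-(-1)^{1+r}=(-1)^r$. The final step is to check $f_{n,k,j}(0)=f_{n,k,j}(1)=0$ by direct evaluation of the two factorial ratios, so that the sum over $r\ge0$ reduces to the stated sum over $r\ge2$. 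I expect the main obstacle to be exactly this sign-and-factorial accounting in the second term: keeping the three separate applications of Lemma~\ref{starL=LI} consistent and confirming that the accumulated sign reduces to $(-1)^{1+r}$ while the coefficient of the $[d^*,\Lambda]$-term reduces to $-\tfrac1{j+1}$.
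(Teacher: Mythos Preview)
Your proposal is correct and follows essentially the same route as the paper: both compute $[\Lambda,d]L^j\alpha$ and $\star\,\mathbb{I}^{-1}d\,\mathbb{I}\,\star L^j\alpha$ separately via Lemmas~\ref{[L^j,Lambda]}, \ref{[d,L^n]}, and \ref{starL=LI}, expand $d\alpha=\sum_r L^r\alpha_r$, and then subtract, with the final observation $f_{n,k,j}(0)=f_{n,k,j}(1)=0$ truncating the sum to $r\ge 2$. The only cosmetic difference is that the paper treats $\star\,\mathbb{I}^{-1}d\,\mathbb{I}\,\star$ first and $[\Lambda,d]$ second, and your anticipated ``main obstacle'' (the sign-and-factorial bookkeeping yielding $(-1)^{r+1}$ and $-\tfrac{1}{j+1}$) is exactly what the paper works through explicitly.
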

 
\begin{Remark}
In the almost K\"ahler case we have $[d^*, \Lambda ]= [d,L]=0$, and $d \alpha = \alpha_0 + L \alpha_1$, so we recover the identity  
 \[
 [\Lambda, d] =  \star\, \mathbb{I}^{-1} \,d \, \mathbb{I}\,  \star,
  \]
 as expected.
\end{Remark}

\begin{proof} The proof consists of several calculations using the lemmas in the previous section. 
Using $[\II,L] = 0$, and $\II^2 = (-1)^k$ on $\cA^k$, we have
\begin{align*}
\star\, \mathbb{I}^{-1} \,d \, \mathbb{I}\,  \star  \eta &=  \star\, \mathbb{I}^{-1} \,d \, \mathbb{I}  \left(  
(-1)^{\frac {k(k+1)}{2} } \frac{j!}{(n-k-j)!} L^{n-k-j} \, \II \alpha \right) \\
&=  (-1)^{\frac {k(k+1)}{2} +k} \frac{j!}{(n-k-j)!} \, \star \II^{-1} \, d L^{n-k-j} \alpha.
\end{align*}
By Lemma \ref{[d,L^n]} this is equal to 
\begin{equation}\label{this}
(-1)^{\frac {k(k+1)}{2} +k} \frac{j!}{(n-k-j)!} \, \star \II^{-1} \,  L^{n-k-j} d   \alpha
\\
+ (-1)^{\frac {k(k+1)}{2} +k} \frac{j!}{(n-k-j-1)!} \, \star \II^{-1} \,  [d,L]  L^{n-k-j-1} \alpha.
\end{equation}
We first simplify each of these last two summands. 
By Equation (\ref{dalpha}), the fact that $\star$ commutes with $\II$, and Lemma \ref{starL=LI} applied to $\alpha_r \in P^{k+1-2r}$, 
the first summand of Equation (\ref{this}) is equal to:
\begin{multline*}
(-1)^{\frac {k(k+1)}{2} +k} \frac{j!}{(n-k-j)!} \, \star \II^{-1} \,  \left( \sum_{r=0}^\infty L^{n-k-j+r} \alpha_r \right) \\
 =
 (-1)^{\frac {k(k+1)}{2} +k} \frac{j!}{(n-k-j)!} \, \II^{-1} \,  \left(   \sum_{r=0}^\infty
(-1)^{\frac {(k+1-2r)(k-2r+2)}{2} } \frac{(n-k-j+r)!}{(j+r-1)!} L^{j+r-1} \,  \II \alpha_r \right) \\
=  \sum_{r=0}^\infty (-1)^{r+1}  \frac{j!(n-k-j+r)!}{(j+r-1)!(n-k-j)!} L^{j+r-1} \, \alpha_r .
\end{multline*}

For the second summand, we use the fact that for all $m \geq 0$ and  $\beta \in \cA^k$, 
\[
\star L^m [d,L] \beta = \star [d,L] L^m \beta  = (-1)^{k+1} [d^*,\Lambda]  \star L^m \beta.
\]
So, the second summand in Equation (\ref{this}) is equal to
\begin{multline*}
(-1)^{\frac {k(k+1)}{2} +k} \frac{j!}{(n-k-j-1)!} \, \star \II^{-1} \,   [d,L]  L^{n-k-j-1} \alpha
\\ =
(-1)^{\frac {k(k+1)}{2}+1} \frac{j!}{(n-k-j-1)!} \,  \II^{-1} \,   [d^*, \Lambda]  \star L^{n-k-j-1} \alpha
\\
= 
(-1)^{\frac {k(k+1)}{2}+1} \frac{j!}{(n-k-j-1)!} \,  \II^{-1} \,   [d^*, \Lambda]  (-1)^{\frac {k(k+1)}{2} } \frac{(n-k-j-1)!}{(j+1)!} L^{j+1} \, \II  \alpha
\\
=
  \frac{-1}{j+1}  \,  \II^{-1} \,   [d^*, \Lambda]  \, \II   \,  L^{j+1} \alpha,
\end{multline*}
where in the second to last step we used Lemma \ref{starL=LI}.

In summary, we have
\begin{equation} \label{LHS1}
\star\, \mathbb{I}^{-1} \,d \, \mathbb{I}\,  \star  \eta
=
 \sum_{r=0}^\infty (-1)^{r+1}  \frac{j!(n-k-j+r)!}{(j+r-1)!(n-k-j)!} L^{j+r-1} \, \alpha_r
- \frac{1}{j+1}  \,  \II^{-1} \,   [d^*, \Lambda]  \, \II   \,  L^{j+1} \alpha.
\end{equation}

We now compute $[\Lambda, d] \eta$, by first computing $\Lambda d L^j \alpha$, using that all $\alpha_r$ are primitive.
By Equation(\ref{dalpha}), Lemma \ref{[L^j,Lambda]}, and Lemma \ref{[d,L^n]}, we have:
\begin{align*}
\Lambda d L^j \alpha &= \Lambda L^j d \alpha + \Lambda [d,L^j] \alpha \\
& = \Lambda L^j \left( \sum_{r=0}^\infty L^r \alpha_r \right)  + j \Lambda [d,L]L^{j-1} \alpha \\
& =  \sum_{r=0}^\infty \Lambda L^{j+r} \alpha_r + j \Lambda [d,L]L^{j-1} \alpha \\
& = - \sum_{r=0}^\infty   (j+r)(k+1-2r-n+j+r-1) L^{j+r-1} \alpha_r + j \Lambda [d,L]L^{j-1} \alpha .
\end{align*} 
Next using, $\alpha$ is primitive, and Lemma \ref{[L^j,Lambda]}  again, we have 
\begin{align*}
d \Lambda L^j \alpha &= -j(k-n+j-1) d L^{j-1} \alpha \\
& = -j(k-n+j-1) L^{j-1} d \alpha  -j(k-n+j-1) (j-1) [d,L] L^{j-2} \alpha \\
& = -j(k-n+j-1)  \left( \sum_{r=0}^\infty L^{j+r-1} \alpha_r \right)   -j(k-n+j-1) (j-1) [d,L] L^{j-2}  \alpha.
\end{align*}
So, 
\begin{equation*} 
[\Lambda, d] \eta =   \sum_{r=0}^\infty   (r(n-k+r)-j) L^{j+r-1} \alpha_r 
+  j \Lambda [d,L]L^{j-1} \alpha + j(j-1)(k-n+j-1) [d,L] L^{j-2} \alpha.
\end{equation*}
Using this last equation and combining with Equation (\ref{LHS1}) we obtain the desired result:
\begin{align*} 
[\Lambda, d] \eta - \star\, \mathbb{I}^{-1} \,d \, \mathbb{I}\,  \star  \eta &=   \frac{1}{j+1}  \,  \II^{-1} \,   [d^*, \Lambda]  \, \II   \,  L^{j+1} \alpha  \\
&+  j \Lambda [d,L]L^{j-1} \alpha + j(j-1)(k-n+j-1) [d,L] L^{j-2} \alpha \\
& + \sum_{r=0}^\infty   f_{n,k,j}(r) L^{j+r-1} \alpha_r,
\end{align*}
where
\[
f_{n,k,j}(r) = (r(n-k+r)-j) +(-1)^{r}  \frac{j!(n-k-j+r)!}{(j+r-1)!(n-k-j)!}.
\]
It is a curious fact that $f(0)=f(1) = 0$, whereas for $r \geq 2$, $f(r)$ is in general non-zero.
\end{proof}

\section{Applications}

On an almost K\"{a}hler manifold, using the bidegree decompositions of $d$ and $d^*$, one may derive from (\ref{theequation}) the relation 
\[[\Lambda,\del]=i\delb^*,\]
involving $\Lambda$, $\del$ and the adjoint of $\delb$. 
For a non-K\"{a}hler Hermitian manifold there is an additional term 
\[[\Lambda,\del]=i(\delb^*+ \taub^*)\]
where $\taub=[\Lambda,[\delb,L]]$ is the zero-order \textit{torsion operator} (see \cite{De, SWH}). In the case of $(0,q)$-forms this gives
\[
 \Lambda \del \alpha = i \delb^* \alpha + i [\Lambda, \delb^*] L  \alpha.
\] 
Next we use Theorem \ref{AHidentity} to derive this local identity also in the non-integrable case.

\begin{Proposition}\label{0qforms}
For all $\alpha \in \cA^{0,q}$ in an almost Hermitian manifold we have 
\[
 \Lambda \del \alpha = i \delb^* \alpha + i [\Lambda, \delb^*] L  \alpha.
\] 
\end{Proposition}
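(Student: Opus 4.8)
\textbf{The plan} is to specialize the general formula in Theorem~\ref{AHidentity} to the case of a form $\alpha \in \cA^{0,q}$ and read off the stated identity. A $(0,q)$-form has total degree $k = q$, but the crucial observation is that any $(0,q)$-form is automatically primitive, since $\Lambda$ has bidegree $(-1,-1)$ and would send it to $\cA^{-1,q-1} = 0$. Thus $\alpha \in P^q$ and we may apply the theorem with $j = 0$. With $j = 0$ the right-hand side simplifies dramatically: the terms $j\Lambda[d,L]L^{j-1}\alpha$ and $j(j-1)(\cdots)[d,L]L^{j-2}\alpha$ both vanish, and in the coefficient function $f_{n,k,j}(r)$ I would examine what survives.

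First I would write out $d\alpha = \alpha_0 + L\alpha_1 + L^2\alpha_2 + \cdots$ with $\alpha_r \in P^{k+1-2r}$, and then restrict attention to the bidegree $(1,0)$-component, namely $\del\alpha$, since the Proposition concerns only $\Lambda\del\alpha$ rather than the full $\Lambda d\alpha$. Because $\del$ has bidegree $(1,0)$ and $L$ has bidegree $(1,1)$, the term $L^r\alpha_r$ contributes to bidegree $(1,0)$ only through a piece of $\alpha_r$ of bidegree $(1-r, -r)$; for $(0,q)$-forms the relevant primitive components $\alpha_0$ and $\alpha_1$ carry the $\del$ and the $\delb^*$-type information. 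I would then invoke the identity $\mathbb{I}^{-1} \circ d \circ \mathbb{I} = -i(\mub - \delb + \del - \mu)$ recorded after Lemma~\ref{conjbyI}, together with the bidegree bookkeeping, to convert the term $\frac{1}{j+1}\II^{-1}[d^*,\Lambda]\II\, L^{j+1}\alpha$ (at $j=0$, this is $\II^{-1}[d^*,\Lambda]\II\, L\alpha$) into the expression $i[\Lambda,\delb^*]L\alpha$ appearing in the statement, extracting the $\delb^*$-component of $d^*$ and tracking the sign $(-i)$ produced by conjugation with $\II$.

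The main obstacle, I expect, is the careful extraction of bidegrees: the master identity in Theorem~\ref{AHidentity} is an equation of full forms mixing all four components of $d$, whereas the Proposition isolates the single operator equation in bidegree $(0,q-1)$ coming from $\Lambda\del$. I would project both sides onto the appropriate bidegree, using that $\Lambda\del$ lowers bidegree by $(1,1)$ and then raises by $(1,0)$, so lands in $(0,q-1)$, and match this against the $i\delb^*\alpha$ term (which has bidegree $(0,q-1)$) plus the correction $i[\Lambda,\delb^*]L\alpha$. The term $\star\,\II^{-1}d\,\II\,\star$ on the left of the theorem must be handled using $\II^{-1}d\,\II = -i(\mub-\delb+\del-\mu)$ and the intertwining relations $\star\Lambda = L\star$, $\star L = \Lambda\star$; combining these yields the factor of $i$ and the adjoint $\delb^*$ in the correct place. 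Once the bidegree projection is pinned down, the remaining work is routine sign-chasing and cancellation, with the vanishing $f_{n,k,0}(0) = f_{n,k,0}(1) = 0$ ensuring that no spurious primitive terms $\alpha_0, \alpha_1$ survive to obstruct the clean form of the identity.
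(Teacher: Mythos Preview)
Your proposal is correct and follows essentially the same approach as the paper: specialize Theorem~\ref{AHidentity} at $j=0$ using that $(0,q)$-forms are primitive, expand each side via the bidegree decomposition of $d$ and $d^*$ (and the identity $\II^{-1}d\,\II = -i(\mub-\delb+\del-\mu)$), then project onto the $(0,q-1)$ component. The one point you leave implicit is the fate of the remaining sum term $f_{n,k,0}(2)L\alpha_2$: the paper observes that for $\alpha\in\cA^{0,q}$ the Lefschetz decomposition of $d\alpha$ terminates at $r=2$ with $\alpha_2\in\cA^{0,q-3}$, so $L\alpha_2$ has pure bidegree $(1,q-2)$ and drops out under the $(0,q-1)$ projection---but your plan to project onto that bidegree would uncover this automatically.
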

\begin{proof}
 By bidegree reasons $\alpha$ is a primitive form and we have $d\alpha=\alpha_0+L\alpha_1+L^2\alpha_2$ where $\alpha_i$ are primitive. By expanding each term in the equality of Theorem \ref{AHidentity} with respect to the bidegree decomposition $d=\mub+\delb+\del+\mu$, in the case $j=0$, we obtain:
\[
 [\Lambda,d]\alpha=\Lambda d \alpha =   \Lambda (\del+\mu) \alpha,
\]
 \[
\star\, \mathbb{I}^{-1} \,d \, \mathbb{I}\,  \star  \alpha= i (\delb^*-\mub^*) \alpha,
\]
and
\[
 \mathbb{I}^{-1} \,[d^*, \Lambda ] \, \mathbb{I} \, L \alpha = i [\Lambda, \delb^*-\mub^*] L  \alpha.
\]
In particular, all terms decompose into sums of pure bidegrees $(0,q-1)$ and $(1,q-2)$.
Note as well that the remaining term 
\[f_{n,0,k}(2)L\alpha_2\]
given in Theorem \ref{AHidentity}
has pure bidegree $(1,q-2)$, since $\alpha_2$ must have bidegree $(0,q-3)$.
By putting together all terms of bidegree $(0,q-1)$ we obtain the desired identity.
\end{proof}

\begin{Remark}
 The proof of Proposition \ref{0qforms} gives a second identity relating the operators $\Lambda$, $\mu$ and $\mub$ and their adjoints, which also contains the term $f_{n,0,k}(2)L\alpha_2$.
 For forms in $\cA^{0,2}$, this extra term vanishes by bidegree reasons, since $\alpha_2=0$.
 Then the second identity reads
 \[
 \Lambda \mu \alpha = -i \mub^* \alpha - i [\Lambda, \mub^*] L  \alpha.
\]
This corrects the 
 identity
 \[
 [\Lambda, \mu] = -i \mub^*
\] 
known in the almost K\"{a}hler case for arbitrary forms (see \cite{AK}).
\end{Remark}

The previous proposition can be used to give a uniqueness result for the Dirichlet problem on compact domains with boundary.

\begin{Corollary}
Let $\Omega$ be a compact domain in an almost complex manifold $(M,J)$, with smooth boundary, and let $g: \Omega \to \CC$, and $\phi : \del \Omega \to \CC$ be smooth. Then the Dirichlet problem,
\[
\del \delb u = g \quad \quad \textrm{with} \quad u |_{\del \Omega} = \phi,
\]
has at most one solution $u: \Omega \to \CC$. 

In particular, if $(M,J)$ is a compact connected almost complex manifold, and $f: M \to \CC$ is a smooth map of almost complex manifolds,  then $f$ is constant.
\end{Corollary}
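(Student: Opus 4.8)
The plan is to reduce both assertions to the weak and strong maximum principles for a single scalar second-order elliptic operator extracted from Proposition~\ref{0qforms}. Define $Q := i\Lambda\del\delb$ acting on smooth complex functions $u \in \cA^{0,0}$. Applying Proposition~\ref{0qforms} to the $(0,1)$-form $\alpha = \delb u$ and multiplying through by $i$ yields the pointwise operator identity
\[
Qu \;=\; i\Lambda\del\delb u \;=\; -\,\delb^*\delb u \;-\; [\Lambda,\delb^*] L\delb u,
\]
where I have used that $\delb^* u = 0$ on functions, so that $\delb^*\delb u = \Delta_\delb u$. I would then show that $Q$ is a real, second-order, uniformly elliptic operator carrying no zeroth-order term, and finish by the maximum principle.

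First, $Q$ annihilates constants, since $\del\delb(\text{const})=0$; equivalently, both summands above involve $\delb u$ and hence kill constants, so $Q$ has no zeroth-order term. Next I claim $Q$ maps real functions to real functions. For real $u$ one computes $\overline{Qu} = -i\Lambda\,\overline{\del\delb u} = -i\Lambda\,\delb\del u$, using that conjugation interchanges $\del$ and $\delb$ and commutes with $\Lambda$. On functions the operators $\mu$ and $\mub$ vanish for bidegree reasons, so $du = \del u+\delb u$ and the bidegree $(1,1)$ component of $d^2u=0$ reads $\del\delb u + \delb\del u = 0$; hence $\overline{Qu} = i\Lambda\del\delb u = Qu$, proving $Q$ is real.

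It remains to establish ellipticity. By the identity above, the principal part of $Q$ is carried entirely by $-\delb^*\delb = -\Delta_\delb$, which on functions is the non-negative Laplacian with leading symbol $\tfrac12|\xi|^2$. The remaining term $[\Lambda,\delb^*]L\delb u$ is only first order in $u$, because $[\Lambda,\delb^*]$ is a zeroth-order (algebraic) operator: its principal symbol $[\Lambda,\sigma(\delb^*)(\xi)]$ is a pointwise algebraic expression in $\xi$, $\omega$ and the metric which vanishes in the K\"ahler case (where $[\Lambda,\delb^*]=0$), and therefore vanishes for every almost Hermitian structure, since these share the same pointwise algebraic data. Thus $Q = -\Delta_\delb + (\text{first order})$ is uniformly elliptic with definite-sign leading symbol.

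With these three properties in hand the conclusion follows. For uniqueness, if $u,u'$ both solve the Dirichlet problem then $w = u-u'$ satisfies $\del\delb w = 0$, hence $Qw=0$, with $w|_{\del\Omega}=0$; applying $Q$ to $\Ret w$ and $\Imt w$ separately (legitimate since $Q$ is real) and invoking the weak maximum principle for elliptic operators without zeroth-order term forces $\Ret w = \Imt w = 0$, so $w\equiv 0$. For the Liouville statement, a pseudoholomorphic $f:M\to\CC$ satisfies $\delb f = 0$, whence $\del\delb f = 0$ and $Qf = 0$ on the compact connected $M$ without boundary; the strong maximum principle of E.~Hopf, applied to $\Ret f$ and $\Imt f$, each of which attains an interior extremum, forces them to be constant, so $f$ is constant. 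The main obstacle is the ellipticity step, specifically verifying that $[\Lambda,\delb^*]$ contributes nothing to the second-order part; I would settle this by the pointwise symbol comparison with the K\"ahler case described above.
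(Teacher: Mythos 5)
Your proposal is correct and follows essentially the same route as the paper: Proposition \ref{0qforms} with $q=1$ applied to $\alpha=\delb u$, the observation that $[\Lambda,\delb^*]$ is zeroth order so the resulting equation is second-order elliptic with no zeroth-order term, and the Hopf maximum principle (with $\delb f=0$ for pseudoholomorphic $f$ giving the Liouville statement). The only cosmetic differences are that you justify the zeroth-order claim by a pointwise symbol comparison with the K\"ahler case, where the paper simply notes $[\Lambda,\delb^*]=[d,L]^*$ and $[d,L]\eta = d\omega\wedge\eta$, and that you make explicit the realness check via $\del\delb u+\delb\del u=0$ on functions (valid even without integrability), a point the paper leaves implicit.
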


\begin{proof}
It suffices to show the only solution to the homogenous equation with $g=0$ is a constant function.

In any coordinate chart $\psi:V  \to \R^{2n}$ containing any maximum point, we pullback $J$ to 
$\psi(V)$ and consider the $J$-preserving map $u \circ \psi^{-1}: \psi(V) \to \CC$. The components of $d$ are natural with respect to this $J$-preserving map and we use the compatible metric on $\psi(V)$ to define $\Lambda$ and $\delb^*$. Then by Proposition \ref{0qforms} with $q=1$
we obtain
\[
-i \Lambda \del \delb u =  \delb^* \delb u  + [ \Lambda, \delb^*] L \delb u
\]
 on $\psi(V)$. Note $\delb^* \delb$ is quadratic, self-adjoint, and positive, and 
$[ \Lambda, \delb^*] L \delb$ is first order since $[ \Lambda, \delb^*]  = [d,L]^*$ is zeroth order, because $[d,L] \eta = d \omega \wedge \eta$. Then  the right hand side is zero, so the maximum principle due to E. Hopf applies \cite{Hopf}, showing $u$ is constant in a neighborhood of the maximum point and therefore, by connectedness, $u$ is constant.

The final claim follows taking $\Omega = M$, with empty boundary, $g=0$, and noting the condition that $f$ is a map of almost complex manifolds implies $\delb f = 0$.

\end{proof}

\begin{Remark}
 In \cite{CWDol}, we introduce a Dolbeault cohomology theory that is valid for all almost complex manifolds. The above corollary is key in showing that, for a compact connected almost complex manifold, this cohomology is well-behaved in lowest bidegree, in the sense that $H_{\Dol}^{0,0}(M)\cong \CC$.
\end{Remark}

Finally, we refer the reader to the work of Feehan and Leness \cite{FL}, where the relation of Proposition \ref{0qforms}, for $q=1$, is used to show that the moduli spaces of unitary anti-self-dual connections over any almost Hermitian 4-manifold is almost Hermitian, whenever the Nijenhuis tensor has sufficiently small $C^0$-norm.

\bibliographystyle{alpha}

\bibliography{biblio}

\begin{thebibliography}{Dem86}

\bibitem[CW18]{CWDol}
J.~Cirici and S.O. Wilson.
\newblock Dolbeault cohomology for almost complex manifolds.
\newblock {\em Preprint arxiv:1809.1416}, 2018.

\bibitem[CW20]{AK}
J.~Cirici and S.~O. Wilson.
\newblock Topological and geometric aspects of almost {K}\"{a}hler manifolds
  via harmonic theory.
\newblock {\em Selecta Math. (N.S.)}, 26(3):35, 2020.

\bibitem[dBT01]{BaTo}
P.~de~Bartolomeis and A.~Tomassini.
\newblock On formality of some symplectic manifolds.
\newblock {\em Internat. Math. Res. Notices}, (24):1287--1314, 2001.

\bibitem[Dem86]{De}
J.-P. Demailly.
\newblock Sur l'identit\'{e} de {B}ochner-{K}odaira-{N}akano en
  g\'{e}om\'{e}trie hermitienne.
\newblock In {\em S\'{e}minaire d\/'analyse {P}. {L}elong-{P}. {D}olbeault-{H}.
  {S}koda, ann\'{e}es 1983/1984}, volume 1198 of {\em Lecture Notes in Math.},
  pages 88--97. Springer, Berlin, 1986.

\bibitem[FL]{FL}
P.~Feehan and T.~Leness.
\newblock Virtual {M}orse theory, almost {H}ermitian manifolds,
  $\mathrm{SO}(3)$ monopoles, and applications to four-manifold topology.
\newblock preprint.

\bibitem[Hop02]{Hopf}
E.~Hopf.
\newblock {\em Selected works of {E}berhard {H}opf with commentaries}.
\newblock American Mathematical Society, Providence, RI, 2002.
\newblock Edited by Cathleen S. Morawetz, James B. Serrin and Yakov G. Sinai.

\bibitem[Huy05]{Huy}
D.~Huybrechts.
\newblock {\em Complex geometry}.
\newblock Universitext. Springer-Verlag, Berlin, 2005.
\newblock An introduction.

\bibitem[Ohs82]{Osh}
T.~Ohsawa.
\newblock Isomorphism theorems for cohomology groups of weakly {$1$}-complete
  manifolds.
\newblock {\em Publ. Res. Inst. Math. Sci.}, 18(1):191--232, 1982.

\bibitem[TT20]{TaToPreprint}
N.~Tardini and A.~Tomassini.
\newblock Differential operators on almost-{H}ermitian manifolds and harmonic
  forms.
\newblock {\em Complex Manifolds}, 7(1):106--128, 2020.

\bibitem[Wei58]{Weil}
A.~Weil.
\newblock {\em Introduction \`a l'\'etude des vari\'et\'es k\"ahl\'eriennes}.
\newblock Publications de l'Institut de Math\'ematique de l'Universit\'e de
  Nancago, VI. Actualit\'es Sci. Ind. no. 1267. Hermann, Paris, 1958.

\bibitem[Wil20]{SWH}
S.~O. Wilson.
\newblock Harmonic symmetries for {H}ermitian manifolds.
\newblock {\em Proc. Amer. Math. Soc.}, 148(7):3039--3045, 2020.

\end{thebibliography}

\end{document}